\newtheorem{definition}{Definition}%[section]
\newtheorem{lemma}[definition]{Lemma}
\newtheorem{cor}[definition]{Corollary}
\newtheorem{remark}[definition]{Remark}
\newtheorem{observation}[definition]{Observation}
\newcommand{\qsi}{$q$-SI $\sigma$-differential }
\newcommand{\com}{\mathbb {C}} 
\newcommand{\Id}{\mathrm{Id}}
\newcommand{\eL}{\mathcal{L}}
\DeclareSymbolFont{largesymbol}{OMX}{yhex}{m}{n}
\DeclareMathAccent{\Widehat}{\mathord}{largesymbol}{"62}
\newcommand{\itqsi}{{\it qsi }}
\newcommand{\GL}{\mathrm GL}
\newcommand{\N}{\mathbb{N}}
\newcommand{\G}{\mathbb{G}}
\begin{document}
\title{Toward quantized Picard-Vessiot theory,  \\
Further observations on our previous example }
\author{Katsunori Saito and 
Hiroshi Umemura \\  
Graduate School of Mathematics \\
Nagoya University\\ \ 
\\
Email\quad {\small 
m07026e@math.nagoya-u.ac.jp and 
 umemura@math.nagoya-u.ac.jp} }
%\date{}
\maketitle
\begin{abstract}
It is quite natural to wonder whether there is a difference-differential equations, the Galois group of which is a quantum group that is neither commutative nor co-commutative.  
Believing that there was no such linear equations, 
 we explored non-linear equations and discovered a such equation \cite{saiume13.1}, \cite{saiume13.2}.
  We show that the example is related with a linear equation. We treat only one 
  charming 
  example. It is not, however, an isolated example.  
We  open a window that allows us to have a look at    
a quantized Picard-Vessiot theory. 
\end{abstract}

%%%%%%%%%%%%%%%%%%%%%%%%%%%%%%%%%%%%%%%%%%%%%%%%%%%%%%%%%%%%%%%%%%%%
%%%%%%%%%%%%%%%%%%%%%%%%%%%%%%%%%%%%%%%%%%%%%%%%%%%%%%%%%%%%%%%%%%%%

\section{Introduction}
We believed for a long time that it was  impossible to 
quantize Picard-Vessiot theory, 
Galois theory for 
linear difference or differential equations. 
Namely, there  was no Galois theory for linear difference-differential equations, the Galois group of which is a quantum group that is, in general,  
neither commutative nor co-commutative. 
Our mistake came from a misunderstanding of preceding   
works Hardouin \cite{har10} and Masuoka and Yanagawa \cite{masy}. 
They studied linear \qsi  equations, \itqsi equations for short, 
 under two assumptions 
on \itqsi base field $K$  and  
\itqsi module $M$: 
\begin{enumerate} 
  \item 
  The base field $K$contains $\com (t)$. 
  \item 
  On the $K [\sigma , \, \theta^* ]$-module $M$ the equality 
\[
\theta ^{(1)} = \frac{1}{(q-1)t}( \sigma - \Id \sb M).
\]
\end{enumerate}
holds. 
Under these conditions,  
 their Picard-Vessiot extension is 
realized in the category of commutative algebras.
The second assumption seems too restrictive as clearly explained in \cite{masy}.  
If we drop one of these conditions, there are many linear 
\itqsi equations whose Picard-Vessiot ring is not commutative and Galois group is a quantum group
that is neither commutative nor co-commutative.  
\par 
We analyze only one favorite example \eqref{176} over the base field $\com$,  
 which  is equivalent to the non-linear equation 
in \cite{saiume13.2}.  The reader's imagination would go far away. 
In the example, we have a Picard-Vessiot ring
$R$ that is non-commutative, simple \itqsi ring  
(Observation \ref{obs3} and Lemma \ref{1016a}).  
The Picard-Vessiot ring $R$ is a torsor of a quantum group (Observation \ref{obs5}).  
 We have the Galois correspondence (Observation \ref{1018c}) and non-commutative Tannaka theory (Observation \ref{1018b}). 
 \par 
 We are grateful to A.~Masuoka and K.~Amano 
for teaching us their Picard-Vessiot theory 
and clarifying our idea.   
\section{Field extension $\com (t) /\com $ from classical and quantum view points}  
In \S 8 of the previous paper \cite{saiume13.2}, we studied a non-linear 
\qsi equation, which we call   
{\it qsi} equation for short, 
\begin{equation}\label{171}
\theta^{(1)}(y) =1,\qquad \sigma(y) =qy, 
\end{equation}
where  $q \not= 0,\, 1$ is a complex number. 
Let $t$ be a variable over the complex number field $\com$. 
We assume to simplify the situation that $q$ is not a root of unity. 
We denote by $\sigma \colon \com(t) \to \com(t)$ the $\com$-automorphism of the field $\com(t)$ 
of rational functions 
sending $t$ to $qt$. 
We introduce the 
$\com$-linear operator  $\theta^{(1)} \colon \com(t) \to \com(t)$
 %as a $\com$-linear map 
 %given 
 by 
\[
\theta^{(1)}\left( f(t)\right) := \frac{f(qt)-f(t)}{(q-1)t} \quad\text{ for every  $f(t) \in \com(t)$. }
\]
We set
\[
\theta^{(m)} := 
\begin{cases}
\Id_{\com(t)}& \text{ for $m=0$}\\
\frac{1}{[m]_{q}!}\left(\theta^{(1)}\right)^{m} & \text{ for $m=1,\,2,\,\cdots$. }
\end{cases}
\]
As we assume that $q$ is not a root of unity, 
the number $[m]\sb q$ 
in the formula is  
not equal to $0$ and hence   
the formula determines the family $\theta ^{*}=
\{ \theta ^ {(i)}\,  | \, i \in \N \, \}$ of operators. 
So $(\com(t),\, \sigma,\, \theta^{\ast})$ is a {\it qsi} field. See \S 7, \cite{saiume13.2} and  $y=t$ is a solution for system \eqref{171}.

The system \eqref{171} is non-linear in the sense that for two solutions $y_{1},\, y_{2}$ of 
\eqref{171}, a $\com$-linear combination $c_{1}y_{1}+c_{2}y_{2}\,(c_{1},\,c_{2} \in \com)$ is not a solution of the system. 

However, the system is very close to a linear system. 
To illustrate this, let us look at the differential field extension $(\com (t), \partial \sb t )/ (\com , \, \partial \sb t )$, where we denote the derivation 
$d/dt$ by $\partial_{t}$   
%We consider the differential field $(\com(t),\,\partial_{t})$, where $t$ is a variable over the complex numbere field $\com$ and $\partial_{t} = d/dt$ is the differential operator.  
The variable $t \in \com(t)$ satisfies a non-linear differential equation
\begin{equation}\label{172}
\partial_{t} t -1 = 0.
\end{equation}
The differential field extension $(\com(t),\, \partial_{t})/(\com,\,\partial_{t})$ is, however, the Picard-Vessiot extension for the linear differential equation
\begin{equation}\label{173}
\partial^{2}_{t} t = 0.
\end{equation}
To understand the relation between \eqref{172} and \eqref{173}, we introduce the 
$2$-dimensional 
$\com$-vector space 
\[
E := \com t \oplus \com \subset \com[t].
\]
The vector space $E$ is close under the action of the 
 derivation $\partial \sb t$ so that $E$ is a 
 $\com [\partial\sb t]$-module.
%that is a $\com[\partial_{t}]$-module such that the dimension 
%of the module $E$ as a $\com$-vector space 
%is $2$. 
Solving the differential equation associated with the $\com[\partial_{t}]$-module $E$ is to find a differential algebra $(L,\, \partial_{t})/\com$ and 
a $\com[\partial_{t}]$-module morphism
\[
\varphi \colon E \rightarrow L. 
\]
Writing $\varphi(t) = f_{1},\, \varphi(1)=f_{2}$ that are elements of $L$, we have 
\[
\begin{bmatrix}
\partial_{t} f_{1}\\
\partial_{t} f_{2}
\end{bmatrix} = \begin{bmatrix}
0&1\\\
0&0
\end{bmatrix} \begin{bmatrix}
f_{1}\\
f_{2}
\end{bmatrix}.
\]
Since $\partial_{t} t = 1,\, \partial_{t} 1= 0$,  
in the differential field $(\com(t),\, \partial_{t})/\com$, we find two 
solutions
$%\begin{bmatrix}
^ t (t , \,  1)
%\end{bmatrix}
$ and $
%\begin{bmatrix}
^t (1, \, 0 )
%\end{bmatrix}
$ 
that are two column vectors in $\com (t)^2$ satisfying  
\begin{equation}\label{175}
\partial_{t} \begin{bmatrix}
t & 1\\
1 & 0
\end{bmatrix}=\begin{bmatrix}
0 & 1\\
0 & 0
\end{bmatrix}
\begin{bmatrix}
t & 1\\
1 & 0
\end{bmatrix}
\end{equation} 
and  
$$
\begin{vmatrix}
t & 1\\
1 & 0
\end{vmatrix}
\not= 0.
$$
Namely, $\com(t)/\com$ is the Picard-Vessiot extension for linear differential equation \eqref{175}. 
\par
We can argue similarly for the {\it qsi} field extension $(\com(t),\, \sigma,\, \theta^{\ast})/\com$. 
You will find a subtle difference between the differential case and the
 {\it qsi} case. 
Quantization of Galois group arises from here.
\par 
Let us set 
$$
M = \com t \oplus \com \subset \com[t]
$$
 that is a $\com[\sigma,\, \theta^{\ast}
 ]$-module. 
Maybe to avoid the confusion that you might have in Remark \ref{mas} below, 
writing $m\sb 1 = t$ and  $m\sb 2 =1$, 
  we had better define formally 
  $$
  M = \com m\sb 1 \oplus  \com m\sb 2
  $$ 
  as a $\com$-vector space on which $\sigma$ and $\theta ^{(1)}$ operate by 
\begin{equation}  
%\begin{align}
\begin{bmatrix}
\sigma (m_{1}) \\ 
\sigma (m_{2})
\end{bmatrix} = \begin{bmatrix}
q & 0\\
0 & 1
\end{bmatrix}\begin{bmatrix}
 m_{1}\\
 m_{2}
\end{bmatrix},  \qquad 
                  \label{176}
\begin{bmatrix}
\theta^{(1)}( m_{1})\\
\theta^{(1)} (m_{2})
\end{bmatrix} = \begin{bmatrix}
0 & 1\\
0 & 0
\end{bmatrix}\begin{bmatrix}
 m_{1}\\
 m_{2}
\end{bmatrix}. %\label{177}
%\end{align}
\end{equation}
Solving $\com[\sigma,\, \theta^{\ast}]$-module $M$ is equivalent to 
find elements $f\sb 1 , \, f\sb 2 $ in a {\it qsi} algebra 
$(A, \, \sigma , \, \theta ^ * )$ satisfying   
the  system of linear difference-differential equation
\begin{equation}
\begin{bmatrix}
\sigma (f_{1})\\
\sigma (f_{2})
\end{bmatrix} = \begin{bmatrix}
q & 0\\
0 & 1
\end{bmatrix}\begin{bmatrix}
 f_{1}\\
 f_{2}
\end{bmatrix}, \qquad \label{176b}
\begin{bmatrix}
\theta^{(1)}( f_{1})\\
\theta^{(1)} (f_{2})
\end{bmatrix} = \begin{bmatrix}
0 & 1\\
0 & 0
\end{bmatrix}\begin{bmatrix}
 f_{1}\\
 f_{2}
\end{bmatrix} 
\end{equation}
in the \itqsi algebra $A$.

%%%%%Lemma 
\begin{lemma} \label{lem1}
Let $(L,\, \sigma,\, \theta^{\ast})/\com$ be a {\it qsi} field extension. If a $2\times 2$ matrix $Y=(y_{ij}) \in \mathrm{M}_{2}(L)$ satisfies 
a system of difference-differential equations 
\begin{equation}
\sigma Y = \begin{bmatrix}
q & 0\\
0 & 1
\end{bmatrix}Y \textit{ and } %\qquad
\label{1720} 
\theta^{(1)}Y = \begin{bmatrix}
0 & 1\\
0 & 0
\end{bmatrix}Y,  %\label{1721}
\end{equation}
then $\det Y = 0$. 
\end{lemma}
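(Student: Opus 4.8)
The plan is to reduce the statement to the behaviour of the single element $d := \det Y \in L$: I would show that $d$ solves the rank-one difference-differential system $\sigma(x) = qx$, $\theta^{(1)}(x) = 0$, and then argue that such a system has no non-zero solution in a qsi field extension of $\com$.

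\emph{Step 1 (the ``$q$-Wronskian'' identity).} First I would compute the effect of $\sigma$ and $\theta^{(1)}$ on $d$. Since $\sigma$ is a ring automorphism and $\det$ is multiplicative, applying $\sigma$ entrywise to $\sigma Y = \begin{bmatrix} q & 0 \\ 0 & 1 \end{bmatrix}Y$ gives $\sigma(d) = \det\begin{bmatrix} q & 0 \\ 0 & 1 \end{bmatrix}\cdot \det Y = q\,d$. The equation $\theta^{(1)}Y = \begin{bmatrix} 0 & 1 \\ 0 & 0 \end{bmatrix}Y$ forces $y_{21} = \theta^{(1)}(y_{11})$, $y_{22} = \theta^{(1)}(y_{12})$, and $\theta^{(1)}(y_{21}) = \theta^{(1)}(y_{22}) = 0$; moreover $\sigma(y_{21}) = y_{21}$ and $\sigma(y_{22}) = y_{22}$. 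Expanding $\theta^{(1)}(d) = \theta^{(1)}(y_{11}y_{22}) - \theta^{(1)}(y_{12}y_{21})$ by the $\sigma$-skew Leibniz rule for $\theta^{(1)}$, the terms carrying $\theta^{(1)}(y_{21})$ or $\theta^{(1)}(y_{22})$ drop out and what is left is $\theta^{(1)}(y_{11})\,y_{22} - \theta^{(1)}(y_{12})\,y_{21} = y_{21}y_{22} - y_{22}y_{21} = 0$. (Conceptually: the $\sigma$-multiplier of $\det Y$ is $\det\begin{bmatrix} q & 0 \\ 0 & 1 \end{bmatrix} = q$ and its $\theta^{(1)}$-logarithmic derivative is $\mathrm{tr}\begin{bmatrix} 0 & 1 \\ 0 & 0 \end{bmatrix} = 0$.) So $\sigma(d) = q\,d$ and $\theta^{(1)}(d) = 0$.

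\emph{Step 2 (using the qsi structure).} Next I would use that in a qsi field $\sigma$ and $\theta^{(1)}$ are intertwined, so that a $\theta^{(1)}$-constant must be fixed by $\sigma$: in the model field $(\com(t),\sigma,\theta^{\ast})$ this is the chain $\theta^{(1)}(f)=0 \Leftrightarrow f(qt)=f(t) \Leftrightarrow \sigma(f)=f$, and in general it follows from the defining relations of a qsi field (or, if one prefers, by embedding $L$ into a qsi field of the kind considered in \cite{saiume13.2}). Applied to $d$ this gives $\sigma(d)=d$; together with $\sigma(d)=q\,d$ it yields $(q-1)\,d = 0$, and since $q\neq 1$ and $L$ is a field we conclude $d = 0$, i.e. $\det Y = 0$.

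Step 1 is routine ``$q$-Wronskian'' bookkeeping. The real content is Step 2, which is exactly where the hypothesis that $(L,\sigma,\theta^{\ast})$ is a qsi field — and not merely a field carrying an automorphism and a $\sigma$-skew derivation — gets used: over the base field $\com$ there is no element playing the role of $t$, so one cannot invoke $\theta^{(1)} = \frac{1}{(q-1)t}(\sigma - \Id)$, and the incompatibility between a non-zero $q$-eigenvector of $\sigma$ and membership in $\ker\theta^{(1)}$ has to be extracted from the intrinsic structure of qsi fields. I expect that to be the only point needing care.
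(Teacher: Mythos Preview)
Your Step~1 is correct: with the $\sigma$-skew Leibniz rule one indeed gets $\sigma(d)=qd$ and $\theta^{(1)}(d)=0$ for $d=\det Y$.

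The gap is Step~2. The assertion ``in a qsi field a $\theta^{(1)}$-constant must be $\sigma$-fixed'' is \emph{false} in general, and it does not follow from the defining relations (automorphism $\sigma$, $\sigma$-skew derivation $\theta^{(1)}$, the commutation $\theta^{(1)}\sigma=q\,\sigma\theta^{(1)}$, etc.). A concrete counterexample already appears implicitly in the paper: take $L=\com(Q)$ with $\sigma(Q)=qQ$ and $\theta^{(1)}\equiv 0$ (so all $\theta^{(i)}=0$ for $i\ge 1$). This is a perfectly good commutative qsi field over $\com$, the element $Q$ satisfies $\theta^{(1)}(Q)=0$ and $\sigma(Q)=qQ\neq Q$. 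Thus there is no hope of concluding $d=0$ from $\sigma(d)=qd$ and $\theta^{(1)}(d)=0$ alone; your proposed Step~2 cannot be completed without using more of the specific system~\eqref{1720}. Your own remark that ``there is no element playing the role of $t$'' is exactly the obstruction, not a detail to be filled in.

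The paper's proof avoids this entirely by a different, purely algebraic trick that you have all the ingredients for. Instead of looking at $\theta^{(1)}(\det Y)$, apply $\theta^{(1)}$ to the \emph{off-diagonal} product $y_{11}y_{12}$, once as written and once as $y_{12}y_{11}$. The $\sigma$-skew Leibniz rule is not symmetric, so the two expansions differ:
\[
\theta^{(1)}(y_{11}y_{12})=y_{21}y_{12}+q\,y_{11}y_{22},\qquad
\theta^{(1)}(y_{12}y_{11})=y_{22}y_{11}+q\,y_{12}y_{21}.
\]
Commutativity of $L$ forces these to be equal, giving $(q-1)(y_{11}y_{22}-y_{12}y_{21})=0$ and hence $\det Y=0$ directly. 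The whole content of the lemma is this tension between the asymmetric $q$-Leibniz rule and the commutativity of $L$; no structural fact about constants in qsi fields is needed.
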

\begin{proof}
It follows from \eqref{1720} 
\begin{equation}\label{1722}
\sigma(y_{11}) = qy_{11},\; \sigma(y_{12})=qy_{12},\; \sigma(y_{21}) = y_{21},\; \sigma(y_{22}) = y_{22}
\end{equation}
and 
\begin{equation}\label{1723}
\theta^{(1)}(y_{11}) = y_{21},\, \theta^{(1)}(y_{12})= y_{22},\, \theta^{(1)}(y_{21}) = 0,\, \theta^{(1)}(y_{22}) = 0. 
\end{equation}
It follows from \eqref{1722} and \eqref{1723}
\begin{equation} \label{1724}
\theta^{(1)}(y_{11}y_{12}) = \theta^{(1)}(y_{11})y_{12} + \sigma(y_{11})\theta^{(1)}(y_{12}) = y_{21}y_{12} + qy_{11}y_{22}
\end{equation}
and similarly 
\begin{equation}\label{1725}
\theta^{(1)}(y_{12}y_{11}) = y_{22}y_{11} + qy_{12}y_{21}. 
\end{equation}
As $y_{11}y_{12}=y_{12}y_{11}$, equating \eqref{1724} and \eqref{1725}, we get
\[
(q-1)(y_{11}y_{22}-y_{12}y_{21}) = 0
\]
so that $\det Y = 0$. 
\end{proof}
\begin{cor}\label{cor2}
Let $(K,\, \sigma,\, \theta)$ be a {\it qsi} field over $\com$. Then the {\it qsi} linear equation
\begin{equation}\label{1011a}
\sigma Y = \begin{bmatrix}
q & 0\\
0 & 1
\end{bmatrix}Y \, \text{ and }\, 
\theta^{(1)}Y = \begin{bmatrix}
0 & 1\\
0 & 0
\end{bmatrix}Y
\end{equation}
has no {\it qsi} Picard-Vessiot extension
$L/K$ in 
 the following sense. There exists a solution 
$Y \in \GL \sb 2 (L)$ to \eqref{1011a} such that the abstract field $L$ is generated by the entries of the matrix $Y$ over $K$. The field of constants of the {\it qsi} over-field $L$ coincides with 
the  field of constants of the base 
field $K$. 
 \end{cor}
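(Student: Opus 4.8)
The plan is to reduce the statement directly to Lemma~\ref{lem1}. Suppose, toward a contradiction, that a \itqsi Picard-Vessiot extension $L/K$ for \eqref{1011a} existed in the sense of the statement. Then $(L,\, \sigma,\, \theta^{\ast})$ is in particular a \itqsi field extension of $K$, hence a \itqsi field extension of $\com$, and by hypothesis there is a matrix $Y \in \GL_2(L)$ solving \eqref{1011a}.

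Now I would simply observe that the system \eqref{1011a} is literally the system \eqref{1720} of Lemma~\ref{lem1}. Applying that lemma to the \itqsi field $L$ and to the solution $Y$, we conclude $\det Y = 0$. This contradicts $Y \in \GL_2(L)$. Hence no extension $L/K$ enjoying the three required properties simultaneously---existence of an \emph{invertible} solution $Y$, generation of the field $L$ by the entries of $Y$ over $K$, and equality of the constants of $L$ with those of $K$---can exist.

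I do not expect any genuine obstacle here: the corollary is an immediate consequence of Lemma~\ref{lem1}, and in fact the condition on constants is never used, since the argument already precludes an invertible solution in \emph{any} \itqsi field extension whatsoever. The point worth stressing is that the obstruction is intrinsic to the commutative setting and cannot be removed by enlarging the field: the proof of Lemma~\ref{lem1} relied only on the commutativity relation $y_{11}y_{12} = y_{12}y_{11}$, valid in every field, together with the twisted Leibniz rule for $\theta^{(1)}$, so passing to a bigger \itqsi field can never restore $\det Y \neq 0$. This is precisely what motivates searching for the Picard-Vessiot object in a noncommutative category, as carried out later in the paper with the ring $R$ attached to \eqref{176}.
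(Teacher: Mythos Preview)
Your proof is correct and follows exactly the paper's own approach: the paper's proof consists of the single sentence ``This is a consequence of Lemma~\ref{lem1},'' and your argument is precisely the natural unpacking of that sentence. Your additional remarks---that the constants condition is never used and that commutativity is the real obstruction---are accurate and well observed, though they go beyond what the paper states.
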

\begin{proof}
This is a consequence of Lemma \ref{lem1}.
\end{proof}
\begin{remark}\label{mas}
Masuoka pointed out that 
Corollary \ref{cor2} is compatible with Remark 4.4 and Theorem 4.7 of Hardouin \cite{har10}. See also Masuoka and Yanagawa \cite{masy}.  
They assure  
the existence of Picard-Vessiot extension for a   
$K[ \sigma , \, \theta ^ *]$-module $N$ if the following two conditions are satisfied;  
\begin{enumerate}
 \item 
 The {\it qsi} base field  $K$ contains $(\com (t), \, \sigma , \, \theta^* )$, 
 \item
  The operation of $\sigma$ and  $\theta^{(1)} $ on the module $N$ 
  as well as on the base field $Y$,  
  satisfy the relation 
$$
\theta ^{(1)} = \frac{1}{(q-1) t}(\sigma - \Id \sb N).
$$ 
\end{enumerate}
In fact, even if the base field $K$ contains 
$(\com (t), \, \sigma , \, \theta ^ * )$, in 
$K\otimes \sb \com   M$, we have by definition of 
the 
$\com [\sigma , \, \theta ^*]$-module $M$, 
$$
\theta ^{(1)} (m\sb 1 ) = m\sb 2 \not= 
\frac{1}{t}m\sb 1 = 
\frac{1}{(q-1)t}
\left( \sigma (m\sb 1) - m\sb 1\right) .
$$ 
So 
$K\otimes \sb \com   M$
does not satisfy the second condition above.

\end{remark}
%%%%%%%%%%%%%%%%%%%%%%%%%%%%%%%%%%%%%%%%%%%%%%%%%%%
\section{Quantum normalization of $( \com (t) , \sigma , \,
 \theta ^{*})/ \com $}
We started from the {\it qsi} field extension $\com(t)/\com$. The  column vector 
%%%%%%176 177
$^t(t,\, 1) \in \com(t)^{2}$ is a solution to the system of equations \eqref{176}, i.e. we have
\[
\begin{bmatrix}
\sigma (t)\\
\sigma (1)
\end{bmatrix} = \begin{bmatrix}
q & 0\\
0 & 1
\end{bmatrix}\begin{bmatrix}
t\\
1
\end{bmatrix}, \\
\qquad
\begin{bmatrix}
\theta^{(1)} (t)\\
\theta^{(1)} (1)
\end{bmatrix} = \begin{bmatrix}
0 & 1\\
0 & 0
\end{bmatrix}\begin{bmatrix}
t\\
1
\end{bmatrix} .
\]
By 
applying to the {\it qsi} field extension $( \com (t) , \sigma , \,
 \theta ^{*})/ \com $, 
the general procedure of 
\cite{ume96.2}, 
\cite{hei10} that is believed to lead us to the normalization,
we arrived at the Galois hull 
 $\eL = \com(t) [Q,\, Q^{-1},\, X]$. 
This suggests an appropriate normalization of the non-commutative 
\itqsi  ring extension $\com(t)[Q,\, Q^{-1}]/\com$ is a (maybe the), {\it qsi} Picard-Vessiot extension of  the system of equations \eqref{176}. More precisely, $Q$ is a variable over $\com(t)$ satisfying the commutation relation
\[
Qt=qtQ. 
\]
We understand $R=\com[t,\, Q,\,Q^{-1}]$ as a sub-ring of $S=\com[[t,\, Q]][t^{-1},\,Q^{-1}]$. 
The ring $S$ is a non-commutative {\it qsi} algebra by setting 
\[
\sigma(Q) = qQ,\, \theta^{(1)}(Q) = 0\, \textit{ and }\, \sigma(t) = qt,\, \theta^{(1)}(t) = 1
\]
and $R=\com[t,\, Q,\,Q^{-1}]$ is a {\it qsi} sub-algebra. 
Thus we get a {\it qsi} ring extension $(R,\, \sigma,\, \theta^{\ast})/\com = (\com[t,\, Q,\, Q^{-1}],\, \sigma,\, \theta^{\ast})/\com$. We examine that $(R,\, \sigma,\, \theta^{\ast})/\com$ is a non-commutative Picard-Vessiot extension for the systems of equations \eqref{176}. 
\begin{observation}\label{obs1}
The $\com[\sigma,\, \theta^{\ast}]$-module $M$ has two solutions linearly independent over $\com$. In fact, setting
\begin{equation}
Y:= \begin{bmatrix}
Q & t \\
0 & 1
\end{bmatrix} \in \mathrm{M}_{2}(R), 
\end{equation}
we have 
\begin{equation}
\sigma Y = \begin{bmatrix}
q & 0\\
0 & 1
\end{bmatrix}Y \textit{ and }
\theta^{(1)}Y = \begin{bmatrix}
0 & 1\\
0 & 0
\end{bmatrix}Y. 
\end{equation}
So the column vectors $^{t}(Q,\, 0),\, ^{t}(t,\, 1) \in R^{2}$ are $\com$-linearly independent solution of the system of equations \eqref{176}. 
\end{observation}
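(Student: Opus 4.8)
The plan is to verify the two matrix identities entry by entry, using nothing but the defining relations of the qsi structure on $S$ (hence on the qsi sub-algebra $R=\com[t,Q,Q^{-1}]$) recalled just above: $\sigma(Q)=qQ$, $\theta^{(1)}(Q)=0$, $\sigma(t)=qt$, $\theta^{(1)}(t)=1$, together with $\sigma(1)=1$, $\theta^{(1)}(1)=0$ and $\sigma(0)=\theta^{(1)}(0)=0$. No twisted Leibniz rule is needed, since every entry of $Y$ is one of the generators $Q,t$ or a scalar; the only structural input is that $(S,\sigma,\theta^{\ast})$ is a well-defined non-commutative qsi algebra in which $R$ is a qsi sub-algebra, which was set up in the preceding paragraph.

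First I would apply $\sigma$ to each entry of $Y=\bigl[\begin{smallmatrix}Q&t\\0&1\end{smallmatrix}\bigr]$, obtaining the matrix with rows $(qQ,\,qt)$ and $(0,\,1)$, which is exactly $\bigl[\begin{smallmatrix}q&0\\0&1\end{smallmatrix}\bigr]Y$; then apply $\theta^{(1)}$ entrywise, obtaining the matrix with rows $(0,\,1)$ and $(0,\,0)$, which is exactly $\bigl[\begin{smallmatrix}0&1\\0&0\end{smallmatrix}\bigr]Y$. Hence $Y$ satisfies the system, and in particular each of its columns ${}^{t}(Q,0)$ and ${}^{t}(t,1)$ is a solution of \eqref{176b} in $R$, i.e. defines a $\com[\sigma,\theta^{\ast}]$-module morphism $M\to R$. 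For the $\com$-linear independence of these two column solutions: if $a\,{}^{t}(Q,0)+b\,{}^{t}(t,1)=0$ with $a,b\in\com$, comparing second coordinates gives $b=0$, and then comparing first coordinates gives $aQ=0$; since $Q$ is a variable over $\com(t)$, hence a nonzero (transcendental) element of the domain $R$, we get $a=0$.

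There is essentially no computational obstacle here; the real content is conceptual. The point is the decision to leave the category of qsi \emph{fields}, where Lemma \ref{lem1} and Corollary \ref{cor2} forbid two $\com$-independent solutions of \eqref{176} from coexisting (they would force $\det Y=0$), and to work instead inside the non-commutative qsi ring $R=\com[t,Q,Q^{-1}]$ with the commutation relation $Qt=qtQ$. That enlargement is precisely what the Galois-hull computation $\eL=\com(t)[Q,Q^{-1},X]$ suggests, and once $R$ is in hand the verification is immediate. The one thing worth keeping in mind is that the new variable $Q$ must genuinely enlarge the ambient ring: it is this extra transcendental that makes the formal Wronskian-type determinant $\det Y=Q$ nonzero, so that the two columns are independent, with no contradiction to Lemma \ref{lem1} because $R$ is not a field.
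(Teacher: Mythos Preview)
Your proposal is correct and follows exactly the paper's approach: the paper gives no separate proof for this observation, since the statement itself is the verification (``In fact, setting $Y:=\ldots$ we have $\ldots$''), and you have simply written out the entry-by-entry check of the two matrix identities from the defining relations $\sigma(Q)=qQ$, $\sigma(t)=qt$, $\theta^{(1)}(Q)=0$, $\theta^{(1)}(t)=1$, together with the straightforward $\com$-linear independence of the columns. One small sharpening of your final remark: the reason Lemma~\ref{lem1} does not apply is not merely that $R$ is not a field but that $R$ is non-commutative---the proof of Lemma~\ref{lem1} uses $y_{11}y_{12}=y_{12}y_{11}$, which fails here since $Qt=qtQ\neq tQ$.
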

\begin{observation}\label{obs2}
The ring $\com[t,\, Q,\, Q^{-1}]$ has no zero-divisors. We can consider the ring $K$ of total fractions of $\com[t,\, Q,\, Q^{-1}]$. 
\end{observation}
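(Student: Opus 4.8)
The plan is to identify $R=\com[t,Q,Q^{-1}]$ with a skew Laurent polynomial ring and then read off both assertions from its structure. Since $Qt=qtQ$ with $q\neq 0$, conjugation by $Q$ induces the $\com$-algebra automorphism $\alpha$ of $\com[t]$ determined by $\alpha(t)=qt$, so that $Q^{i}f(t)=f(q^{i}t)\,Q^{i}$ for $f(t)\in\com[t]$ and $i\in\mathbb{Z}$. Inside $S=\com[[t,Q]][t^{-1},Q^{-1}]$ the monomials $t^{a}Q^{b}$ with $a\in\N$ and $b\in\mathbb{Z}$ are $\com$-linearly independent, so every element of $R$ has a unique normal form $\sum_{i}f_{i}(t)Q^{i}$ (a finite sum, $f_{i}(t)\in\com[t]$); that is, $R=\com[t][Q^{\pm 1};\alpha]$.

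First I would check that $R$ has no zero-divisors. Let $a=\sum_{i\geq m}f_{i}(t)Q^{i}$ and $b=\sum_{j\geq n}g_{j}(t)Q^{j}$ be nonzero, written in normal form with $f_{m}\neq 0$ and $g_{n}\neq 0$. Collecting the coefficient of $Q^{m+n}$ in $ab$, the only contribution comes from the product of the two lowest terms, and it equals $f_{m}(t)\,g_{n}(q^{m}t)$. Because $q\neq 0$, the substitution $t\mapsto q^{m}t$ is an automorphism of the integral domain $\com[t]$, hence $g_{n}(q^{m}t)\neq 0$, and therefore $f_{m}(t)\,g_{n}(q^{m}t)\neq 0$. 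Thus $ab\neq 0$, so $R$ is a domain. (A symmetric argument with highest powers of $Q$, or with $Q^{-1}$, works equally well.)

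Next, to obtain the ring $K$ of total fractions it is enough to see that $R$ is a (two-sided) Ore domain. I would deduce this by embedding $R$ in $D_{0}:=\com(t)[Q^{\pm 1};\alpha]$, the skew Laurent polynomial ring over the field $\com(t)$ with respect to $\alpha$ extended to $\com(t)$. The ring $D_{0}$ is a classical two-sided Ore domain, so it possesses a division ring of fractions $D$. Moreover, every element $\sum_{i}h_{i}(t)Q^{i}$ of $D_{0}$ becomes an element of $R$ after multiplication by a common denominator of the $h_{i}(t)\in\com(t)$, applied on the left or (with the denominators suitably shifted by powers of $\alpha$) on the right; hence every element of $D$ is both a left and a right fraction of elements of $R$. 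This exhibits $R$ as an Ore domain with division ring of fractions $D$, and we set $K=D$.

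I do not anticipate a serious obstacle. The one point that must not be skipped is the uniqueness of the normal form $\sum_{i}f_{i}(t)Q^{i}$ for elements of $R$ — equivalently, that realizing $R$ as the subring $\com[t,Q,Q^{-1}]$ of $S$ loses no relations — and this is immediate from the $\com$-linear independence of the monomials $t^{a}Q^{b}$ in $S$. The remaining ingredients, namely the leading-term computation and the standard fact that a skew Laurent polynomial ring over a field is an Ore domain, are routine.
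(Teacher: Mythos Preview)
Your argument is correct, but it takes a different and rather longer route than the paper. The paper dispatches both claims in one line: it uses the embedding $R\subset S=\com[[t,Q]][t^{-1},Q^{-1}]$ together with the assertion that in $S$ every nonzero element is invertible. A subring of a division ring is automatically a domain, and the division subring of $S$ generated by $R$ serves as $K$. You invoke $S$ only to justify the normal form $\sum_i f_i(t)Q^i$, and then work entirely inside the skew Laurent polynomial ring $\com[t][Q^{\pm1};\alpha]$: a lowest-degree-in-$Q$ argument for the domain property, and an Ore-localization argument via the overring $\com(t)[Q^{\pm1};\alpha]$ for the existence of $K$. What you gain is a self-contained treatment that does not depend on knowing that the iterated skew Laurent series ring $S$ is a division ring (a fact the paper states without proof and whose verification is itself a small exercise); what the paper gains is brevity, since once $S$ is granted to be a skew field there is nothing left to check.
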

\begin{proof}
In fact, we have $R \subset\com[[t,\, Q]][t^{-1},\,Q^{-1}]$. In the latter ring every non-zero element is invertible. 
\end{proof}
\begin{observation}\label{obs3}
The ring of {\it qsi} constants $C_{K}$ coincide with $\com$. The ring of $\theta^{\ast}$ constants of $\com[[t,\, Q]][t^{-1},\,Q^{-1}]$ is $\com (Q)$. Moreover 
as we assume that $q$ is not a root of unity, 
the ring of $\sigma$-constants of $\com(Q)$ is equal to $\com$. 
\end{observation}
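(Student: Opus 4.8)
The plan is to compute each of the three constant rings directly from the normal form of elements of $S=\com[[t,\,Q]][t^{-1},\,Q^{-1}]$. Using the commutation relation $Qt=qtQ$, every element of $S$ has a unique expression $x=\sum_{i,j}c_{ij}\,t^{i}Q^{j}$ with all powers of $t$ placed to the left of all powers of $Q$, and the operators $\sigma$ and $\theta^{(1)}$ act coefficientwise on such expansions; making this last point rigorous is the one step that deserves care. First I would record the action on monomials. Since $\theta^{(1)}(t)=1$ and $\theta^{(1)}$ satisfies the twisted Leibniz rule $\theta^{(1)}(ab)=\theta^{(1)}(a)b+\sigma(a)\theta^{(1)}(b)$ used already in the proof of Lemma \ref{lem1}, an easy induction — valid for negative exponents too, starting from $0=\theta^{(1)}(1)=\theta^{(1)}(tt^{-1})$ and $0=\theta^{(1)}(QQ^{-1})$ — gives $\theta^{(1)}(t^{i})=[i]_{q}\,t^{i-1}$, $\theta^{(1)}(Q^{j})=0$, and $\sigma(t^{i}Q^{j})=q^{i+j}t^{i}Q^{j}$; hence $\theta^{(1)}(t^{i}Q^{j})=\theta^{(1)}(t^{i})Q^{j}+\sigma(t^{i})\theta^{(1)}(Q^{j})=[i]_{q}\,t^{i-1}Q^{j}$.

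Next, for the $\theta^{\ast}$-constants: because $q$ is not a root of unity, $[m]_{q}!\neq 0$ for every $m$, so the identity $\theta^{(m)}=\frac{1}{[m]_{q}!}(\theta^{(1)})^{m}$ shows that $x$ is a $\theta^{\ast}$-constant if and only if $\theta^{(1)}(x)=0$. From the monomial formula, $\theta^{(1)}(x)=\sum_{i,j}[i]_{q}\,c_{ij}\,t^{i-1}Q^{j}$, which vanishes if and only if $[i]_{q}\,c_{ij}=0$ for all $i,j$; and again since $q$ is not a root of unity, $[i]_{q}=0$ exactly when $i=0$. Therefore $\theta^{(1)}(x)=0$ if and only if $c_{ij}=0$ for every $i\neq 0$, i.e. if and only if $x$ involves no $t$, which identifies the ring of $\theta^{\ast}$-constants of $S$ with $\com(Q)$.

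Then, for the $\sigma$-constants of $\com(Q)$: writing $x=\sum_{j}c_{j}Q^{j}$ we get $\sigma(x)=\sum_{j}q^{j}c_{j}Q^{j}$, so $\sigma(x)=x$ forces $(q^{j}-1)c_{j}=0$ for all $j$; since $q$ is not a root of unity, $q^{j}\neq 1$ for $j\neq 0$, hence $c_{j}=0$ for $j\neq 0$ and $x=c_{0}\in\com$. (Equivalently, a rational function fixed by $Q\mapsto qQ$ can have no nonzero zero or pole, as such a point would generate an infinite orbit $\{q^{n}a\}$, so it is a monomial $cQ^{k}$ with $q^{k}=1$, forcing $k=0$.) Combining the two computations, the ring of \itqsi constants of $S$ — the intersection of its ring of $\theta^{\ast}$-constants $\com(Q)$ with its ring of $\sigma$-constants — equals $\com$; and since $K$ embeds into $S$ compatibly with $\sigma$ and $\theta^{\ast}$ (Observation \ref{obs2}) and contains $\com$, we conclude $C_{K}=C_{S}\cap K=\com$.

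I expect the only genuine obstacle to be the bookkeeping that turns ``acts coefficientwise on the normal form'' into a rigorous statement inside the noncommutative completed ring $S$: one must fix the left-normal-ordering convention for $t$ versus $Q$, observe that $\sigma$ and $\theta^{(1)}$ respect the relevant topology so that they may legitimately be applied term by term, and check that inverting $t$ and $Q$ enlarges neither constant ring. Everything after that is the elementary arithmetic of the integers $[i]_{q}$ and the powers $q^{j}$, driven entirely by the standing hypothesis that $q$ is not a root of unity.
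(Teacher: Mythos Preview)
The paper gives no separate proof for this observation: the statement itself is the sketch (assert $C_K=\com$, then explain it by saying that the $\theta^{\ast}$-constants of $S$ are $\com(Q)$ and that the $\sigma$-constants of $\com(Q)$ are $\com$ since $q$ is not a root of unity), and your proposal carries out exactly that sketch --- compute $\theta^{(1)}$ and $\sigma$ on monomials $t^{i}Q^{j}$, match coefficients in the normal form, and invoke $[i]_{q}\neq 0$, $q^{j}\neq 1$ for $i,j\neq 0$.

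One small remark: your computation literally identifies the $\theta^{\ast}$-constants of $S$ with the subring of elements having $c_{ij}=0$ for all $i\neq 0$, which, depending on how one reads $\com[[t,Q]][t^{-1},Q^{-1}]$, is the Laurent-series field $\com((Q))$ rather than the rational-function field $\com(Q)$. The paper also writes $\com(Q)$, so this is at worst a shared imprecision; your $\sigma$-constant argument applies verbatim to Laurent series in $Q$, and the final conclusion $C_{K}=\com$ is unaffected.
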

\begin{lemma}\label{1016a}
The non-commutative \itqsi algebra $R$ is simple. There is no  \itqsi bilateral ideal of $R$ except for the zero-ideal and $R$.
\end{lemma}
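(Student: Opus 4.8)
The plan is to prove that every nonzero \itqsi bilateral ideal $I$ of $R = \com[t,Q,Q^{-1}]$ contains a nonzero element of $\com$, hence a unit, so that $I = R$. First I would fix the $\com$-basis $\{\,t^{i}Q^{j} \mid i \in \N,\ j \in \mathbb{Z}\,\}$ of $R$; this is legitimate because the relation $Qt = qtQ$ lets one push every $Q$ to the right of every $t$. On this basis the structure operators act transparently: from $\sigma(t)=qt$ and $\sigma(Q)=qQ$ one gets $\sigma(t^{i}Q^{j}) = q^{i+j}t^{i}Q^{j}$, while from $\theta^{(1)}(t)=1$, $\theta^{(1)}(Q)=0$ and the twisted Leibniz rule $\theta^{(1)}(xy)=\theta^{(1)}(x)y+\sigma(x)\theta^{(1)}(y)$ one gets $\theta^{(1)}(t^{i}Q^{j}) = [i]_{q}\,t^{i-1}Q^{j}$. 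Thus $\theta^{(1)}$ plays the role of $d/dt$: it lowers the $t$-degree by one, and because $q$ is not a root of unity the coefficient $[i]_{q}$ is nonzero for every $i \geq 1$.

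Step one is to descend into $\com[Q,Q^{-1}]$. Take any $a \in I$ with $a \neq 0$, and write $a = \sum_{i=0}^{n} t^{i}f_{i}(Q)$ with $f_{i} \in \com[Q,Q^{-1}]$ and $f_{n}\neq 0$. Iterating the formula above, each summand with $i<n$ is annihilated after at most $n$ applications of $\theta^{(1)}$, while the top summand contributes $(\theta^{(1)})^{n}(t^{n}f_{n}(Q)) = [n]_{q}!\,f_{n}(Q)$; hence $(\theta^{(1)})^{n}(a) = [n]_{q}!\,f_{n}(Q)$ is a nonzero element of $I \cap \com[Q,Q^{-1}]$, using that $I$ is $\theta^{(1)}$-stable. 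Multiplying on the left by a suitable power of $Q$, which is a unit of $R$ and which $I$ absorbs, we obtain a nonzero $h \in I \cap \com[Q]$ with $h(0)\neq 0$.

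Step two is to collapse $\com[Q]$. Among the nonzero elements of $I \cap \com[Q]$ having nonzero constant term, choose one of least $Q$-degree, say $h(Q) = \sum_{j=0}^{m} c_{j}Q^{j}$ with $c_{0}\neq 0$ and $c_{m}\neq 0$. If $m \geq 1$, then $\sigma(h) - q^{m}h = \sum_{j=0}^{m-1} c_{j}(q^{j}-q^{m})Q^{j}$ again lies in $I$, since $I$ is $\sigma$-stable and a $\com$-subspace; it has $Q$-degree at most $m-1$, and its constant term $c_{0}(1-q^{m})$ is nonzero because $q$ is not a root of unity. This contradicts the minimality of $m$. Therefore $m = 0$, so $h = c_{0} \in \com^{\times} \subseteq I$ and $I = R$. (Note that only left multiplication, together with $\com$-linearity and $\sigma,\theta^{(1)}$-stability, has been used, so in fact even every nonzero left \itqsi ideal is all of $R$.)

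I do not anticipate a real obstacle; the one point requiring care is the bookkeeping forced by the skew structure — verifying the monomial formulas for $\sigma$ and $\theta^{(1)}$ and checking at each stage that the newly produced element still lies in $I$ (invoking, in turn, $\theta^{(1)}$-stability, absorption of the units $Q^{\pm 1}$, and $\sigma$-stability with $\com$-linearity). The hypothesis that $q$ is not a root of unity is used essentially and enters exactly twice: it guarantees $[i]_{q}\neq 0$ in Step one and $q^{m}\neq 1$ in Step two. Alternatively, one can replace Step two by the observation that $I \cap \com[Q,Q^{-1}]$ is a $\sigma$-stable ideal of the principal ideal domain $\com[Q,Q^{-1}]$, whose generator must be a single monomial $Q^{j}$ since $\sigma$ multiplies $Q^{j}$ by the pairwise distinct scalars $q^{j}$; this again yields $1 \in I$.
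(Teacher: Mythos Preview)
Your proof is correct and follows essentially the same two-step strategy as the paper: apply $\theta^{(n)}$ (equivalently, iterate $\theta^{(1)}$) to a nonzero element of $I$ to land in $\com[Q,Q^{-1}]$, then use $\sigma$-stability and the fact that $q$ is not a root of unity to shrink the $Q$-degree down to a nonzero constant. The only cosmetic differences are that you phrase Step two as a minimal-degree argument whereas the paper runs an induction on $s$, and you add the (correct) remark that the argument uses only left absorption, so the result already holds for left \itqsi ideals.
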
  
\begin{proof}
Let $I$ be a non-zero \itqsi bilateral ideal of $R$. 
We take an element 
$$
 0 \not= f:= a\sb 0 + ta\sb 1 + \ldots + t^n\,  a\sb n \in I,
$$
where $a\sb i \in \com [ Q, \, Q^{-1}]$ for $ 0 \le i \le n$. We may assume $a\sb n \not= 0$. 
Applying $\theta^{(n)}$ to the element $f$, we conclude that 
$ 0 \not= a\sb n \in \com [Q, \, Q^{-1} ]$ is in the ideal $I$.  Multiplying a monomial $bQ^l$ with $b\in \com $, we find 
a polynomial 
$h = 1 + b\sb 1 Q + \ldots + b\sb s Q ^s \in \com [Q]$ 
with $b\sb s \not= 0$ 
is in the ideal $I$. 
We show that $1$ is in $I$ by induction on $s$.
If $s\, =\, 0$, then there is nothing to prove. 
Assume that the assertion is proved for $ s \le m$. We have to show the assertion for $s = m+1$. 
Then,
since $Q^i$ is an eigenvector of the operator 
$\sigma$ with 
eigenvalue $q^i$ for $i \in \N$, 
$$
\frac{1}{q^{m+1} -1}(q^{m+1}h \, - \, \sigma (h)) 
= 1 + c\sb 1 Q + \ldots + c\sb {m}Q^m \in \com [Q]
$$
is an element of $I$ and by induction hypothesis 
$1$ is in the ideal $I$. 
%applying the operators 
%$\sigma ^ j $ to $h$ for $ 0 \le j \le s$, 
%we can express $1$ as a $\com$-linear 
%combination of the $\sigma ^i (h)$'s 
%$( \, 0\, \le i \, \le s )$. 
%and hence 
%$1  \in I$.  
\end{proof}  
\begin{observation}\label{obs4}
The extension $R/\com$ trivializes the $\com[\sigma,\, \theta^{\ast}]$-module $M$. Namely, there exist constants $c_{1},\,c_{2} \in R \otimes_{\com}M$ such that
\[
 R \otimes_{\com}M \simeq Rc_{1}\oplus Rc_{2}. 
\]
\end{observation}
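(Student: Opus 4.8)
The plan is to run the usual Picard--Vessiot argument: the fundamental matrix $Y$ of Observation~\ref{obs1} is exactly the change of basis that turns the tautological basis of $R\otimes_{\com}M$ into a basis made of \itqsi constants. First I would make the \itqsi structure on $R\otimes_{\com}M$ explicit. Writing $e_{1}:=1\otimes m_{1}$ and $e_{2}:=1\otimes m_{2}$, the module $R\otimes_{\com}M$ is free of rank $2$ as a left $R$-module with basis $e_{1},e_{2}$; the operators act by $\sigma(r\otimes m)=\sigma(r)\otimes\sigma(m)$ and by the twisted Leibniz rule $\theta^{(1)}(r\otimes m)=\theta^{(1)}(r)\otimes m+\sigma(r)\otimes\theta^{(1)}(m)$, which are well defined over $\com$ because $\com$ consists of \itqsi constants, and the higher operators are recovered by $\theta^{(m)}=\frac{1}{[m]_{q}!}(\theta^{(1)})^{m}$ (legitimate since $q$ is not a root of unity), exactly as on $R$. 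By \eqref{176} this yields $\sigma e_{1}=qe_{1}$, $\sigma e_{2}=e_{2}$, $\theta^{(1)}e_{1}=e_{2}$, $\theta^{(1)}e_{2}=0$.

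Next I would introduce the candidates. Recall $Y=\begin{bmatrix}Q&t\\0&1\end{bmatrix}$ from Observation~\ref{obs1}; it lies in $\GL_{2}(R)$ because it has the explicit two-sided inverse $Y^{-1}=\begin{bmatrix}Q^{-1}&-Q^{-1}t\\0&1\end{bmatrix}$ in $\mathrm{M}_{2}(R)$, as one checks directly using $QQ^{-1}=1$. Define
\[
\begin{bmatrix}c_{1}\\ c_{2}\end{bmatrix}:=Y^{-1}\begin{bmatrix}e_{1}\\ e_{2}\end{bmatrix},\qquad\text{that is,}\qquad c_{1}=Q^{-1}\otimes m_{1}-Q^{-1}t\otimes m_{2},\quad c_{2}=1\otimes m_{2}.
\]
Then I would verify that $c_{1}$ and $c_{2}$ are \itqsi constants of $R\otimes_{\com}M$. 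Using $\sigma(Q^{-1})=q^{-1}Q^{-1}$, $\sigma(t)=qt$, $\theta^{(1)}(Q^{-1})=0$ (apply $\theta^{(1)}$ to $Q^{-1}Q=1$) and $\theta^{(1)}(t)=1$, a short computation with the twisted Leibniz rule gives $\sigma c_{i}=c_{i}$ and $\theta^{(1)}c_{i}=0$ for $i=1,2$; the vanishing of $\theta^{(m)}c_{i}$ for $m\ge 2$ is then automatic from $\theta^{(m)}=\frac{1}{[m]_{q}!}(\theta^{(1)})^{m}$.

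Finally I would deduce the trivialization. Since $Y$ is invertible over $R$ we get $\begin{bmatrix}e_{1}\\ e_{2}\end{bmatrix}=Y\begin{bmatrix}c_{1}\\ c_{2}\end{bmatrix}$, i.e. $e_{1}=Qc_{1}+tc_{2}$ and $e_{2}=c_{2}$, so $Rc_{1}+Rc_{2}=Re_{1}+Re_{2}=R\otimes_{\com}M$; and if $r_{1}c_{1}+r_{2}c_{2}=0$ then the row vector $(r_{1},r_{2})Y^{-1}$ vanishes, hence $(r_{1},r_{2})=(r_{1},r_{2})Y^{-1}Y=0$. Thus $c_{1},c_{2}$ is a free left $R$-basis of $R\otimes_{\com}M$ consisting of \itqsi constants, which is precisely the claim $R\otimes_{\com}M\simeq Rc_{1}\oplus Rc_{2}$.

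I do not expect a real obstacle; the points that require care are all bookkeeping ones: keeping the order of $Q$ and $t$ straight (they $q$-commute, not commute), applying the Leibniz rule with the ring factor on the left and the module factor on the right, and making sure the linear-algebra step over the non-commutative $R$ genuinely uses the two-sided inverse $Y\in\GL_{2}(R)$ rather than a formal determinant.
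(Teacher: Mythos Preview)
Your proposal is correct and follows essentially the same route as the paper: you define the very same constants $c_{1}=Q^{-1}\otimes m_{1}-Q^{-1}t\otimes m_{2}$ and $c_{2}=1\otimes m_{2}$ and verify they are \itqsi constants. The only differences are expository: you motivate the choice conceptually via $Y^{-1}$ from Observation~\ref{obs1} and you spell out the freeness argument over the non-commutative ring $R$, both of which the paper leaves implicit.
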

\begin{proof}
In fact, it is sufficient to set
\[
c_{1} := Q^{-1}m_{1}-Q^{-1}tm_{2},\,\qquad  c_{2} := m_{2}. 
\]
Then 
\[
\sigma(c_{1}) = c_{1},\, \qquad \sigma(c_{2}) = c_{2},\, \qquad \theta^{(1)}(c_{2}) = 0
\]
and
\[
\theta^{(1)}(c_{1}) = q^{-1}Q^{-1}\theta^{(1)}(m_{1})
-q^{-1}Q^{-1}m_{2}= q^{-1}Q^{-1}m_{2}
-q^{-1}Q^{-1}m_{2} = 0. 
\]
So we have an $(R,\, \sigma,\, \theta^{\ast})$-module isomorphism $ R \otimes_{\com}M \simeq Rc_{1}\oplus Rc_{2}$. 
\end{proof}
\begin{observation}\label{obs5}
The Hopf algebra $\mathfrak{h}_{q}= \com \langle u,\, u^{-1},\, v\rangle$ with $uv=q\,vu$ co-acts on the non-commutative algebra 
$R$. Namely, we have an algebra morphism 
\[
R \to 
R \otimes \sb \com \mathfrak{h}\sb q
\]
sending 
\[
t \mapsto t\otimes 1 \, +\,  Q \otimes  v,\qquad Q\mapsto Q \otimes u, \qquad Q^{-1} \mapsto Q^{-1}\otimes u^{-1}.
\] 
This gives us an $1\otimes R$-algebra isomorphism
\begin{equation}\label{241}
 R\otimes \sb \com R \to 
 R\otimes \sb \com 
 \mathfrak{h}_{q} 
\end{equation}
Moreover isomorphism \eqref{241} is $\com[\sigma,\, \theta^{\ast}]$-isomorphism, where $\com[\sigma,\, \theta^{\ast}]$ operates on the Hopf algebra $\mathfrak{h}_{q}$ trivially. 
\end{observation}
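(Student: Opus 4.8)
The plan is to verify the claimed co-action directly and then deduce the torsor isomorphism. First I would check that the assignment $t \mapsto t\otimes 1 + Q\otimes v$, $Q \mapsto Q\otimes u$, $Q^{-1}\mapsto Q^{-1}\otimes u^{-1}$ extends to a well-defined algebra morphism $\rho\colon R \to R\otimes_\com \mathfrak{h}_q$. Since $R = \com[t,Q,Q^{-1}]$ is presented by the single relation $Qt = qtQ$ (together with $QQ^{-1}=Q^{-1}Q=1$), it suffices to check that the images satisfy the same relation: with $\bar Q = Q\otimes u$ and $\bar t = t\otimes 1 + Q\otimes v$, compute $\bar Q\bar t$ and $q\bar t\bar Q$ in $R\otimes_\com\mathfrak{h}_q$, using $Qt=qtQ$ in $R$, $uv = qvu$ in $\mathfrak{h}_q$, and the fact that the two tensor factors commute. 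One gets $\bar Q\bar t = Qt\otimes u + Q^2\otimes uv = qtQ\otimes u + qQ^2\otimes vu = q\bar t\bar Q$, as required; invertibility of $\bar Q$ is clear since $Q^{-1}\otimes u^{-1}$ is its inverse. That $\rho$ is coassociative and counital (i.e. a genuine $\mathfrak{h}_q$-comodule algebra structure) follows from the coalgebra structure on $\mathfrak{h}_q$, for which $u$ is grouplike and $v$ is $(1,u)$-skew-primitive, $\Delta(v) = v\otimes 1 + u\otimes v$; one checks $(\rho\otimes\Id)\rho = (\Id\otimes\Delta)\rho$ on the generators $t, Q, Q^{-1}$ and extends by multiplicativity.

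Next I would construct the map in \eqref{241}. Define $\Phi\colon R\otimes_\com R \to R\otimes_\com\mathfrak{h}_q$ as the unique $R$-algebra morphism (for the left factor) sending $1\otimes r \mapsto \rho(r)$ for $r\in R$; concretely $1\otimes t \mapsto t\otimes 1 + Q\otimes v$, $1\otimes Q\mapsto Q\otimes u$, $1\otimes Q^{-1}\mapsto Q^{-1}\otimes u^{-1}$, while $r\otimes 1\mapsto r\otimes 1$. To see $\Phi$ is an isomorphism, I would exhibit the inverse $\Psi$ explicitly: send $r\otimes 1\mapsto r\otimes 1$, and on the Hopf-algebra factor set $u\mapsto Q^{-1}\otimes Q$, $u^{-1}\mapsto Q\otimes Q^{-1}$, and $v\mapsto Q^{-1}\otimes t - Q^{-1}t\otimes 1$ (equivalently $v\mapsto Q^{-1}\otimes(t - \text{something})$; the precise formula is read off by solving $\bar t = t\otimes 1 + Q\otimes v$ for $v$, giving $v = Q^{-1}(\text{``}1\otimes t\text{''} - t\otimes 1)$ inside $R\otimes R$). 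One then checks $\Psi$ respects the relation $uv = qvu$ — this is the same computation as before, transported — so $\Psi$ is a well-defined algebra map, and $\Phi\circ\Psi = \Id$, $\Psi\circ\Phi = \Id$ on generators. This is essentially the standard fact that a comodule algebra is a torsor precisely when the Galois map is bijective, and here the formulas for $c_1, c_2$ from Observation \ref{obs4} are exactly what make the inverse work: $\Psi(1\otimes v)$ is, up to the identifications, $Q^{-1}m_1 - Q^{-1}tm_2 = c_1$ read in $R\otimes R$.

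Finally I would verify that $\Phi$ is compatible with $\sigma$ and $\theta^{(1)}$, where these act diagonally on $R\otimes_\com R$ and act on $R\otimes_\com\mathfrak{h}_q$ only through the first factor (trivially on $\mathfrak{h}_q$). It is enough to check this on generators. For $Q\otimes 1$ and $1\otimes Q$ both sides transform by multiplication by $q$ under $\sigma$ and are killed by $\theta^{(1)}$, matching $\sigma(Q\otimes u) = qQ\otimes u$, $\theta^{(1)}(Q\otimes u)=0$. For $t\otimes 1$ the compatibility is immediate; for $1\otimes t$, using the twisted Leibniz rule $\theta^{(1)}(ab) = \theta^{(1)}(a)b + \sigma(a)\theta^{(1)}(b)$ one computes $\theta^{(1)}(t\otimes 1 + Q\otimes v) = 1\otimes 1 + 0 = 1\otimes 1$, which matches $\Phi$ applied to $\theta^{(1)}(1\otimes t) = 1\otimes 1$; similarly $\sigma(t\otimes 1 + Q\otimes v) = qt\otimes 1 + qQ\otimes v = q(t\otimes 1 + Q\otimes v)$, matching $\Phi(\sigma(1\otimes t)) = \Phi(q(1\otimes t))$. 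The main obstacle — really the only place where anything could go wrong — is getting the skew-primitive comultiplication on $v$ and the noncommutativity constants aligned so that $\rho$ is simultaneously an algebra map \emph{and} coassociative \emph{and} intertwines the operators; once the relation $uv = qvu$ with $\Delta(v) = v\otimes 1 + u\otimes v$ is fixed, every verification reduces to a short monomial computation, and the explicit inverse $\Psi$ removes any need for a dimension or flatness argument.
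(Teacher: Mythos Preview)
The paper offers no proof of this observation at all; it is stated and the text moves on immediately to the discussion of the category $\mathcal{C}(\com[\sigma,\theta^{\ast}])$. Your proposal therefore supplies strictly more than the paper does, and the verifications you outline are correct: the relation check $\bar Q\bar t=q\bar t\bar Q$, the coassociativity via the grouplike $u$ and the skew-primitive $\Delta(v)=v\otimes 1+u\otimes v$, the explicit inverse $\Psi$ obtained by solving for $u$ and $v$, and the compatibility with $\sigma$ and $\theta^{(1)}$ on generators all go through as you describe.

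One small point of friction: the paper calls \eqref{241} a ``$1\otimes R$-algebra isomorphism,'' whereas you build $\Phi$ as $R\otimes 1$-linear (identity on the left factor, $\rho$ on the right). Either the paper's phrasing is a slip, or its convention swaps the roles of the two copies of $R$; in any case this is cosmetic and your argument transports verbatim to the other convention by interchanging the tensor factors in the source. Nothing in your proof depends on which side is chosen.
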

\par 
%Since $\com[\sigma,\, \theta^{\ast}]$ is a non-co-commutative, the set of all $\com[\sigma,\, \theta^{\ast}]$-modules form a tensor category $\mathcal{C}$ that is not commutative. 
We study category $\mathcal{C}(\com[\sigma,\, \theta^{\ast}])$ of left $\com[\sigma,\, \theta^{\ast}]$-modules that are finite dimensional as $\com$-vector spaces. 
We notice first the internal homomorphism 
\[
\mathrm{Hom}_{\com}((M_{1},\, \sigma_{1},\, \theta^{\ast}_{1}),(M_{2},\, \sigma_{2},\, \theta^{\ast}_{2})) \in ob(\mathcal{C}(\com[\sigma,\, \theta]))
\]
exists for two objects $(M_{1},\, \sigma_{1},\, \theta^{\ast}_{1}),(M_{2},\, \sigma_{2},\, \theta^{\ast}_{2}) \in ob(\mathcal{C}(\com[\sigma,\, \theta]))$. 
In fact, let $N :=\mathrm{Hom}(M_{1}, M_{2})$ be the set of $\com$-linear maps from $M_{1}$ to $M_{2}$. 
It sufficient to consider two $\com$-linear maps
\[
\sigma_{h} \colon N \rightarrow N \text{ and } \theta^{(1)}_{h} \colon N \rightarrow  N 
\]
given by
\[
\sigma_{h}(f) := \sigma_{2} \circ f \circ \sigma_{1}^{-1} \text{ and } \theta^{(1)}_{h}(f):= -(\sigma_{h}f)\circ\theta^{(1)}_{1} + \theta^{(1)}_{2}\circ f.
\]
So we have $q \sigma_{h}\circ \theta^{(1)}_{h} = \theta^{(1)}_{h} \circ \sigma$. 
Since $q$ is not a root of unity, we define $\theta^{(m)}_{h}$ in an evident manner
\[
\theta^{(m)}_{h} = \begin{cases}
\Id_{N}, \quad \text{ if } m=0, \\
\frac{1}{[m]_{q}!}\left(\theta^{(1)}_{h}\right)^{m}, \quad \text{ if } m \geq 1.
\end{cases}
\]
Since $\com[\sigma,\, \theta^{\ast}]$ is a Hopf algebra, for two objects $M_{1},\, M_{2} \in ob(\mathcal{C}(\com[\sigma,\, \theta]))$ the tensor product $M_{1}\otimes_{\com}M_{2}$ is defined as an object of $\mathcal{\com [\sigma,\, \theta^{\ast}]}$. 
However, as $\com[\sigma\, \theta^{\ast}]$ is not co-commutative, we do not have, in general, $M_{1}\otimes_{\com}M_{2} \simeq M_{2}\otimes_{\com}M_{1}$. 
%%%
Taking the forgetful functor
\[
\omega \colon \mathcal{C}(\com[\sigma, \theta^{\ast}]) \rightarrow \text{ categoly of $\com$-vector spaces, }
\]
we get a non-commutative Tannakian category. 
%\end{observation}
\begin{observation}\label{1018b}
The non-commutative Tannakian category $\{\{ M \}\}$ generates by the $\com[\sigma,\, \theta^{\ast}]$-module $M$ is equivalent to 
the category $\mathcal{C}\, (\mathfrak{h}\sb{q})$ of
 right $\mathfrak{h}_{q}$-co-modules that are finite dimensional as $\com$-vector space
\end{observation}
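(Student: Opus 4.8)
The plan is to exhibit a pair of quasi-inverse $\com$-linear tensor functors between $\{\{M\}\}$ and $\mathcal{C}(\mathfrak{h}_{q})$, all of whose real content is concentrated in the torsor isomorphism \eqref{241} of Observation \ref{obs5}. Throughout, $R$ serves as the universal solution ring and $\omega_{R}$ denotes the solution-space functor $N\mapsto(R\otimes_{\com}N)^{\sigma,\,\theta^{*}}$, which I use as fiber functor in place of the bare forgetful functor. First I would promote Observation \ref{obs4} from $M$ to every object of $\{\{M\}\}$: for each $N$ the $\com[\sigma,\,\theta^{*}]$-module $R\otimes_{\com}N$ is trivial, i.e. $R\otimes_{\com}\omega_{R}(N)\xrightarrow{\sim}R\otimes_{\com}N$, and $\dim_{\com}\omega_{R}(N)=\dim_{\com}N$. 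Triviality passes to direct sums, tensor products and duals; for subquotients one replaces the classical input ``$R$ is a simple differential ring whose constants form the field $\com$'' by Lemma \ref{1016a} together with Observation \ref{obs3}, so that {\it qsi}-simplicity of $R$ and $C_{R}=\com$ force a {\it qsi}-submodule of a trivial $R\otimes_{\com}(-)$-module to be again trivial and $\dim_{\com}\omega_{R}(-)$ to be additive on short exact sequences. Since $\{\{M\}\}$ is by definition generated from $M$ by precisely these operations, the claim follows, and $\omega_{R}$ is a faithful exact $\com$-linear tensor functor.

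Next I would define the forward functor $F\colon\{\{M\}\}\to\mathcal{C}(\mathfrak{h}_{q})$ by faithfully-flat descent. Applying $R\otimes_{\com}(-)$ to the canonical trivialization $R\otimes_{\com}\omega_{R}(N)\xrightarrow{\sim}R\otimes_{\com}N$ and running the result through \eqref{241} produces, for each $N$, a right $\mathfrak{h}_{q}$-comodule structure $\rho_{N}\colon\omega_{R}(N)\to\omega_{R}(N)\otimes_{\com}\mathfrak{h}_{q}$; set $F(N):=(\omega_{R}(N),\rho_{N})$. Identifying $\omega_{R}(M)$ with the $\com$-span of the columns of the solution matrix $Y$ of Observation \ref{obs1}, one reads off from Observation \ref{obs5} that the coaction sends $Y\mapsto Y\cdot\left[\begin{smallmatrix}u&v\\0&1\end{smallmatrix}\right]$, so $\rho_{M}$ is given by $\left[\begin{smallmatrix}u&v\\0&1\end{smallmatrix}\right]$; in particular $u,u^{-1},v$ all occur as matrix coefficients of $\omega_{R}(M)$ and of its dual. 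Because \eqref{241} is an algebra isomorphism compatible with the coproduct of $\mathfrak{h}_{q}$, the assignment $N\mapsto F(N)$ is a tensor functor, and since $\mathfrak{h}_{q}$ is not cocommutative this $F$ automatically reflects the failure of symmetry of $\otimes$ on both sides; faithfulness of $F$ follows from faithfulness of $\omega_{R}$.

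Then I would build the quasi-inverse $G\colon\mathcal{C}(\mathfrak{h}_{q})\to\{\{M\}\}$ and check the composites. Given $(V,\rho)$, let $\com[\sigma,\,\theta^{*}]$ act on $R\otimes_{\com}V$ through $R$ and set $G(V):=\{x\in R\otimes_{\com}V:(\delta_{R}\otimes\mathrm{id}_{V})(x)=(\mathrm{id}_{R}\otimes\rho)(x)\}$, the $\mathfrak{h}_{q}$-coinvariants for the codiagonal coaction, where $\delta_{R}\colon R\to R\otimes_{\com}\mathfrak{h}_{q}$ is the coaction of Observation \ref{obs5}. This $G(V)$ is a $\com[\sigma,\,\theta^{*}]$-submodule because $\delta_{R}$ commutes with $\sigma$ and $\theta^{(1)}$ (the $\mathfrak{h}_{q}$-factor carrying the trivial {\it qsi} structure), which is part of Observation \ref{obs5}. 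As $\com$ is a field, $R$ is faithfully flat over $\com$, so feeding \eqref{241} into the usual Hopf--Galois descent computation gives $R\otimes_{\com}G(V)\xrightarrow{\sim}R\otimes_{\com}V$; hence $\dim_{\com}G(V)=\dim_{\com}V<\infty$ and $G$ is a tensor functor with $G(\omega_{R}(M))\simeq M$. Since $u,u^{-1},v$ generate $\mathfrak{h}_{q}$ as an algebra, every object of $\mathcal{C}(\mathfrak{h}_{q})$ is a subquotient of a finite direct sum of tensor powers of $\omega_{R}(M)$ and its dual; applying the tensor functor $G$ shows $G(V)\in\{\{M\}\}$. The descent computation run once more, with \eqref{241} and the faithful flatness of $R/\com$, yields natural isomorphisms $F\circ G\simeq\mathrm{id}$ and $G\circ F\simeq\mathrm{id}$, so $F$ is an equivalence of tensor categories.

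The hard part will be the first paragraph: transplanting the Picard--Vessiot bookkeeping --- triviality of subquotients and additivity of $\dim_{\com}\omega_{R}$ --- to the non-commutative ring $R$, where one must track left and right module structures with care and lean on the {\it qsi}-simplicity of Lemma \ref{1016a} in place of the commutative ``simple differential ring'' argument. Once that is in hand, the single isomorphism \eqref{241} carries all the weight and everything else is formal descent.
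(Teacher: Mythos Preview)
Your proposal is correct and follows essentially the same route as the paper: first extend the trivialization of Observation~\ref{obs4} from $M$ to every object of $\{\{M\}\}$ using the simplicity of $R$ (Lemma~\ref{1016a}) and $C_{R}=\com$ (Observation~\ref{obs3}), then send $N$ to the constants of $R\otimes_{\com}N$ equipped with the $\mathfrak{h}_{q}$-coaction inherited from Observation~\ref{obs5}. The paper simply invokes the Amano--Masuoka--Takeuchi machinery for these steps, whereas you spell out the descent and supply the explicit quasi-inverse $G$; this extra detail is welcome but does not change the strategy.
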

\begin{proof}
We owe this proof to Masuoka and Amano.
Our Picard-Vessiot ring $R$ is not commutative. However, 
by Observations \ref{obs3}, \ref{obs4} and Lemma \ref{1016a}, we can apply the arguments of the  classical differential Picqrd-Vessiot theory according  to Amano, Masuoka and Takeuchi
 \cite{amaetal09}, \cite{amam05}. 
 We first show that every $\com [\sigma , \, \theta ^* ]$-module 
  $N \in \{\{ M \}\}$ is trivialized over $R$. 
 Then,   
the functor 
\[
\phi \colon \{\{M\}\} \rightarrow \mathcal{C}(\mathfrak{h}_{q})
\]
is given by 
\[
\phi(N) = \text{ Constants of $\com[\sigma,\, \theta^{\ast}]$-module $R\otimes_{\com}N$ for $N \in ob(\{\{M\}\})$. }
\]
In fact,  
the Hopf algebra $\mathfrak{h}_{q}$ co-acts  on 
$R$ and so on 
the trivial $R$-module 
$R\otimes_{\com}N$ and  consequently on the vector space of constants of $R\otimes_{\com}N$. 
\end{proof}
\begin{observation}\label{1018c}
We have the Galois correspondence between the elements of the  two sets. 
\begin{enumerate}
\item The set of intermediate {\it qsi} division rings of $K/\com$: 
$$
\com , \; \com (t) , \; \com (Q), \; K  
$$
with inclusions 
\begin{equation}\label{1113a}
\com \subset \com (t) \subset K \textit{ and }
\com \subset \com (Q) \subset K.
\end{equation} 
%\[\com \subset \com(t) \subset K\]
\item The set of 
quotient 
$\com$-Hopf algebras of $\mathfrak{h}_{q}$:
\[ \mathfrak{h}_{q},\,\mathfrak{h}_{q}/I,\,
\mathfrak{h}_{q}/J, \; 
 \com \]
 with the sequences of the quotient 
 morphisms corresponding to 
 inclusions 
 \eqref{1113a}
 \begin{equation}
 \mathfrak{h}_{q} \to \mathfrak{h}_{q}/I \to \com 
 \textit{ and } 
 \mathfrak{h}_{q} \to \mathfrak{h}_{q}/J \to \com ,
 \end{equation}
where $I$ is the ideal of the Hopf algebra 
$\mathfrak{h}_{q}$ generated by $u - 1$ and 
similarly 
$J$ is the ideal of the Hopf algebra generated by $v$.   
\end{enumerate}
\end{observation}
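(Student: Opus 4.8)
We propose to obtain the correspondence from the torsor structure of Observation~\ref{obs5} together with the non-commutative Picard--Vessiot formalism of Amano, Masuoka and Takeuchi already invoked in Observation~\ref{1018b}, and then to make the two (finite) sides explicit by direct computation with the co-action $t\mapsto t\otimes 1+Q\otimes v$, $Q\mapsto Q\otimes u$. First I would globalize Observation~\ref{obs5} to the division ring $K$ of total fractions of $R$ (Observation~\ref{obs2}), extending the $1\otimes R$-isomorphism $R\otimes_\com R\xrightarrow{\sim}R\otimes_\com\mathfrak h_q$ to a left-$K$-linear, $\com[\sigma,\theta^{*}]$-linear isomorphism $K\otimes_\com K\xrightarrow{\sim}K\otimes_\com\mathfrak h_q$, so that $K/\com$ is an $\mathfrak h_q$-torsor. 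With Observations~\ref{obs3}, \ref{obs4} and Lemma~\ref{1016a} the hypotheses of the non-commutative Picard--Vessiot theory are in force, and its Galois correspondence yields an inclusion-reversing bijection between the intermediate \itqsi division rings $\com\subseteq L\subseteq K$ and the \emph{generalized quotients} of $\mathfrak h_q$ --- the quotient left $\mathfrak h_q$-module coalgebras $\mathfrak h_q\twoheadrightarrow\bar H$, equivalently the right coideal subalgebras of $\mathfrak h_q$ --- with $L$ recovered as the ring of $\bar H$-coinvariants of $K$. Because $\mathfrak h_q$ is neither commutative nor co-commutative, one must here allow $\bar H$ to be a mere module coalgebra: the object that will be attached to $\com(Q)$ is not a quotient Hopf algebra.

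Next I would carry out the two classifications. On the Hopf side, working in the PBW-type basis $\{v^{i}u^{j}\}$ of $\mathfrak h_q=\com\langle u,u^{-1},v\rangle/(uv-qvu)$ and using that $q$ is not a root of unity, one checks that the left-ideal coideals are exactly $0$, the two-sided Hopf ideal $\mathfrak h_q v$, the left ideal $\mathfrak h_q(u-1)$, and the augmentation ideal $\mathfrak h_q^{+}$: the relevant $q$-binomial coefficients never vanish, so $(v^{k})$ with $k\ge2$ is not a coideal; $\varepsilon$ forces $c=1$ in any candidate $u-c$; and, since $q\ne1$, the two-sided ideal $(u-1)$ already contains $v$. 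The four resulting generalized quotients are $\mathfrak h_q$, $\mathfrak h_q/\mathfrak h_q v\cong\com[u,u^{-1}]$ (a genuine quotient Hopf algebra), $\mathfrak h_q/\mathfrak h_q(u-1)\cong\com[v]$ with $v$ primitive (only a module-coalgebra quotient), and $\com$. On the ring side, each of $\com,\com(t),\com(Q),K$ is a \itqsi sub-division-ring since $\sigma(t)=qt$, $\theta^{(1)}(t)=1$ and $\sigma(Q)=qQ$, $\theta^{(1)}(Q)=0$, and by the correspondence just invoked these four exhaust the intermediate rings.

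Finally I would match the two lists by computing $\bar H$-coinvariants of $K$. Killing $v$ (the quotient $\com[u,u^{-1}]$) makes $t$ coinvariant but not $Q$, giving the coinvariant ring $\com(t)$; sending $u$ to $1$ (the quotient $\com[v]$) makes $Q$ coinvariant but not $t$, giving $\com(Q)$; the extremes $\mathfrak h_q$ and $\com$ give $\com$ and $K$. Hence the correspondence pairs $\com$ with $\mathfrak h_q$, $\com(t)$ with $\com[u,u^{-1}]$, $\com(Q)$ with $\com[v]$, and $K$ with $\com$, so that the chain $\com\subset\com(t)\subset K$ corresponds to $\mathfrak h_q\to\com[u,u^{-1}]\to\com$ and the chain $\com\subset\com(Q)\subset K$ to $\mathfrak h_q\to\com[v]\to\com$ --- which is the Galois correspondence claimed in the theorem (with $J$ the Hopf ideal generated by $v$ and $I$ the left-ideal coideal generated by $u-1$).

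The step I expect to be the main obstacle is this double classification, together with the completeness claims it contains. On the Hopf side one must genuinely enumerate the left-ideal coideals (equivalently, the right coideal subalgebras) of the non-commutative and non-co-commutative algebra $\mathfrak h_q$ and exclude all others; on the ring side one must transport through the correspondence, or prove directly, that $\com,\com(t),\com(Q),K$ are \emph{all} the intermediate \itqsi division rings of the non-commutative extension $K/\com$. The point that makes the usual quantum-subgroup language insufficient --- and that is the real novelty here --- is precisely that $\com(Q)$ is matched not with a quotient Hopf algebra but only with a quotient module coalgebra, so the whole argument has to be run inside the broader generalized-quotient correspondence rather than inside the category of quotient Hopf algebras.
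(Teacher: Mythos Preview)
The paper gives no proof of this Observation at all; the statement is simply followed by the list of Galois groups of the four sub-extensions.  Your proposal therefore supplies far more than the paper does, and its architecture---extend the torsor isomorphism of Observation~\ref{obs5} to $K$, invoke the Amano--Masuoka--Takeuchi correspondence already used in Observation~\ref{1018b}, then classify both sides by hand---is the natural route and is sound.

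You have in fact uncovered a genuine imprecision in the paper's formulation.  As you observe, the \emph{two-sided} ideal of $\mathfrak h_q$ generated by $u-1$ already contains $v$, since $(u-1)v-q\,v(u-1)=(q-1)v$; hence $\mathfrak h_q/I$, read literally as a quotient Hopf algebra, collapses to $\com$.  The object that actually sits between $\mathfrak h_q$ and $\com$ on that side is only a quotient left $\mathfrak h_q$-module coalgebra (equivalently, comes from a right coideal subalgebra), exactly as you say.  So your insistence on running the argument in the generalized-quotient framework is not a stylistic choice but a correction to the statement.

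One point to flag rather than absorb silently: your final matching is the transpose of the paper's.  From the co-action $t\mapsto t\otimes 1+Q\otimes v$, $Q\mapsto Q\otimes u$ of Observation~\ref{obs5} you correctly get that killing $v$ makes $t$ coinvariant and killing $u-1$ makes $Q$ coinvariant, so $\com(t)\leftrightarrow\mathfrak h_q/(v)\cong\com[u,u^{-1}]$ and $\com(Q)\leftrightarrow\mathfrak h_q/\mathfrak h_q(u-1)\cong\com[v]$.  The paper's chains, and the line $\mathrm{Gal}(K/\com(t))\simeq\com[\mathbb G_{a\,\com}]$ just after the Observation, have these swapped; over $\com(t)$ the remaining generator $Q$ satisfies the multiplicative system $\sigma(Q)=qQ$, $\theta^{(1)}(Q)=0$, so that Galois group is $\mathbb G_{m}$, not $\mathbb G_{a}$.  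Your computation is the correct one---just make the discrepancy explicit instead of quietly relabelling $I$ and $J$ in your last sentence.
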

The extensions 
$$
K/\com , \, K/ \com (Q), \, K/ \com (t) 
\textit{ and }  \com (Q) / \com 
$$
are \itqsi Picard-Vessiot extensions with Galois groups
\begin{align*}
&\mathrm{Gal}\, (K/\com ) \simeq \mathfrak{h}_{q}, \quad
\mathrm{Gal}\, (K/\com (Q)) \simeq 
\com [\G\sb{a \com} ], 
\\
&\mathrm{Gal}\, (K/\com (t)) \simeq 
\com [\G \sb{a \com} ], \,
\textit{ and } \,
\mathrm{Gal}\, (\com (Q)/\com ) \simeq 
\com [\G \sb{m \com }]. 
%\com [\G \sb {m \cpm} ] \, 
\end{align*}
Here we denote by $\com [G]$ the Hopf algebra of the 
coordinate ring of an affine group scheme $G$ over $\com$. 
%%%%%%%%%%%%%%%%%%%%%%%%%%%%%%%%%%%%%%%%%%%
\bibliographystyle{plain}
\bibliography{umemura2b131011}
\end{document}